
\documentclass[12pt]{amsart} \usepackage{amsmath} \usepackage{enumerate} \usepackage{amssymb} \usepackage{amsbsy} \usepackage{amsfonts} \textwidth
5.5truein

\textheight 8truein \hoffset -.77in \voffset -.49in \footskip 36pt \setlength{\textwidth}{160mm} \setlength{\textheight}{230mm}
\setlength{\oddsidemargin}{20mm} \setlength{\evensidemargin}{20mm}

\theoremstyle{plain}  \newtheorem{prop}{Proposition}[section] \newtheorem{lem}[prop]{Lemma} \newtheorem{thr}[prop]{Theorem}
 \theoremstyle{definition}   \newtheorem{rem}[prop]{Remark}
  \newtheorem*{re*}{Remark} \newtheorem*{ex*}{Example} \numberwithin{equation}{section}

\newcommand{\sm}{\begin{smallmatrix}} \newcommand{\esm}{\end{smallmatrix}} \newcommand{\Z}{\mathbb{Z}} \newcommand{\Q}{\mathbb{Q}}
    
 \newfont{\FieldFont}{msbm10 scaled\magstep1}  
  
  \newcommand{\trans}[1]{\,^t\hspace{-0.1em}{#1}}

\begin{document}

\title{Bounds for Siegel Modular Forms of genus 2 modulo $p$}

\author{D. Choi} \address{School of Liberal Arts and Sciences, Korea Aerospace University, 200-1, Hwajeon-dong, Goyang, Gyeonggi 412-791, Korea}
\email{choija@kau.ac.kr}

\author{Y. Choie
}
\address{Department of Mathematics \\ Pohang University of Science and Technology\\Pohang\\790-784\\Korea}
\email{yjc@postech.ac.kr}

\author{T. Kikuta} \address{Department of Mathematics \\ Interdisciplinary Graduate School of Science and Engineering \\Kinki University\\Higashi-Osaka\\577-8502\\Japan} \email{kikuta84@gmail.com}

\subjclass[2000]{11F46,11F33} \keywords{Sturm's Formula, Congruence mod $p$, Siegel modular form, Witt operator}

\thanks{The second author was  partially supported by   NRF
2009008-3919 and NRF-2010-0029683}

\begin{abstract} Sturm \cite{S} obtained the bounds for the number of the first Fourier coefficients of elliptic modular form $f$ to determine
vanishing of $f$ modulo a prime $p$. In this paper, we study analogues of Sturm's bound for Siegel modular forms of genus 2. We show the resulting
bound is sharp. As an application, we study congruences involving Atkin's $U(p)$-operator for the Fourier coefficients of Siegel mdoular forms of
genus 2. \end{abstract}

 \maketitle

\section{\bf Introduction and Statement of Main Results}

\medskip

Despite of a vast research result related to congruence properties of  Fourier coefficients
of elliptic modular forms it is surprising to know a very little result are available about congruence properties of Siegel modular forms of higher
genus.  

In this paper, we consider congruence properties for the Fourier coefficients of Siegel modular forms of genus 2.
This is an application of
Sturm's formula:
 let $f=\sum_{n=0}^{\infty}a_f(n)q^n \in \mathcal{O}_L[[q]]\cap
M_k(\Gamma)$ be an elliptic modular form of weight $k$ on a congruence subgroup $\Gamma$, where $\mathcal{O}_L$ denotes the ring of integers of a
number field $L$. In \cite{S} Sturm proved that if $\beta$ is a prime ideal of $\mathcal{O}_L$ for which
 $a_f(n) \equiv 0 \pmod{\beta}$ for $0 \leq n \leq \frac{k}{12}[ \Gamma(1)
: \Gamma ],$ then $a_f(n) \equiv 0 \; (mod \; \beta)$ for every $n\geq 0.$ This formula is called as Sturm's formula.  In fact, Sturm's formula
implies that $f$ is determined modulo $\beta$ by the first $\frac{k}{12}[ \Gamma(1):\Gamma ]$ coefficients. Using the ring structure of weak Jacobi
forms, the first two authors in \cite{C-C} derived the Sturm's formula in the case of Jacobi forms.

Our first main Theorem \ref{main1} states an analogous result of Sturm's formula in the space of
 Siegel modular forms of genus 2.
Note that Poor and Yuen in \cite{P-Y} gave a result in
this direction which is more general, but different (see Remark \ref{PY}-(2)). Our result was obtained independently. \\

Let $\Gamma_2$ be the full Siegel modular group $\Gamma_2=Sp_2(\mathbb{Z})$ and $\Gamma$ be a congruence subgroup of $\Gamma _2$ with level $N$. Let
$M_2(\Gamma)$ be the space of Siegel modular forms of weight $k$ on $\Gamma$ and let $S_k(\Gamma)$ denote the space of cusp forms in $M_k(\Gamma)$. In
the following theorem, we obtain analogues of Sturm's formula for Siegel modular form of genus 2.

\begin{thr}\label{main1} Let $k$ be an even positive integer. Suppose that $p\geq5$ is a prime and that $F\in M_k(\Gamma)$ with $p$-integral rational
coefficients such that $$ F(\tau,z,\tau')=\sum_{\begin{smallmatrix} n,r,m \in \frac{1}{N}\mathbb{Z}\\ n,m,4nm-r^2\geq 0 \end{smallmatrix}}A(n,r,m)q^n
\xi^r q'^m,$$ with $ q=e^{2\pi i \tau}, \xi=e^{2\pi i z}, q'= e^{2\pi i \tau'}.$ If $A(n,r,m)\equiv 0 \pmod{p}$ for every $n$, $m$ such that $$0\leq n \leq \frac{k}{10}[\Gamma_2:\Gamma] \text{ and }0\leq m \leq
\frac{k}{10}[\Gamma_2:\Gamma],$$ then $F\equiv0 \pmod{p}$. \end{thr}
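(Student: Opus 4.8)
The plan is to remove the level, then induct on the weight at full level, using the Witt (diagonal restriction) operator together with Sturm's elliptic formula to peel off one factor of Igusa's cusp form $\chi_{10}$ at each step. First I would reduce to full level. Writing $d=[\Gamma_2:\Gamma]$ and letting $\gamma$ run over coset representatives of $\Gamma\backslash\Gamma_2$, I form the norm $\widetilde F=\prod_\gamma F|_k\gamma$, a Siegel modular form on the full group $\Gamma_2$ of weight $kd$ with $p$-integral rational coefficients. The box hypothesis for $F$ on $[0,\tfrac{k}{10}d]^2$ is arranged exactly so that it forces the analogous hypothesis for $\widetilde F$ at the full-level bound $\tfrac{kd}{10}$. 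Moreover, for $p\ge5$ the ring of genus-$2$ Siegel modular forms reduces mod $p$ to an integral domain (its even-weight part being the polynomial ring in $E_4,E_6,\chi_{10},\chi_{12}$), so $\widetilde F\equiv0\pmod p$ forces some factor $\overline{F|_k\gamma}$ to vanish; since reduction commutes with the $\Gamma_2$-action on $q$-expansions, this gives $\overline F=0$. Thus it suffices to treat full level ($N=1$), where $n,r,m\in\Z$ and the bound is $\tfrac{k}{10}$.

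At full level I argue by induction on the even weight $k$. The main tool is the Witt operator
\[
W(F)(\tau,\tau')=F(\tau,0,\tau')=\sum_{n,m}\Bigl(\sum_r A(n,r,m)\Bigr)q^n q'^m ,
\]
which lands in $M_k(SL_2(\Z))\otimes M_k(SL_2(\Z))$ by Witt's theorem. The hypothesis says the coefficient $\sum_r A(n,r,m)$ vanishes mod $p$ whenever $n,m\le\tfrac{k}{10}$. Since $\tfrac{k}{10}\ge\tfrac{k}{12}$, Sturm's formula \cite{S} applies in each elliptic variable: writing $\overline{W(F)}=\sum_i \bar g_i\otimes\bar h_i$ with the $\bar h_i$ linearly independent over $\Fp$, the vanishing of the $q^n q'^m$-coefficients for $n,m\le\tfrac{k}{10}$ forces each $\bar g_i$ to vanish mod $p$ up to the elliptic Sturm bound, hence $\bar g_i\equiv0$ and $\overline{W(F)}=0$. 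In other words, $F$ vanishes on the diagonal $z=0$ modulo $p$.

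The decisive step is to convert diagonal vanishing into divisibility by $\chi_{10}$. For $p\ge5$, Igusa's weight-$10$ cusp form $\chi_{10}$ has $p$-integral coefficients with a unit leading Fourier--Jacobi term, and its reduction $\bar\chi_{10}$ is a nonzero, non-zero-divisor element of the mod-$p$ ring cutting out the diagonal; hence $\overline{W(F)}=0$ yields $F\equiv\chi_{10}\,G\pmod p$ for some $G\in M_{k-10}(\Gamma_2)$ with $p$-integral coefficients. Because $\chi_{10}$ has its lowest-order term in bidegree $(n,m)=(1,1)$ (of shape $(\xi-2+\xi^{-1})\,q q'+\cdots$), the box hypothesis for $F$ on $[0,\tfrac{k}{10}]^2$ passes to the box $[0,\tfrac{k}{10}-1]^2\supseteq[0,\tfrac{k-10}{10}]^2$ for $G$. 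By the induction hypothesis $G\equiv0\pmod p$, whence $F\equiv0\pmod p$; the base cases $k<10$ carry no $\chi_{10}$, so $W$ is injective mod $p$ and the elliptic Sturm step finishes directly.

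The main obstacle is precisely this divisibility step: everything hinges on the mod-$p$ analogue of Witt's theorem, namely that reduction modulo $p$ respects the equivalence \emph{$F$ vanishes on the diagonal if and only if $\chi_{10}$ divides $F$}. This requires controlling the denominators in Igusa's generators $E_4,E_6,\chi_{10},\chi_{12}$ (which is exactly where $p\ge5$ is used) and knowing that the even-weight part of the graded ring stays an integral domain after reduction. The remaining bookkeeping — the precise index shift under multiplication by $\chi_{10}$, and, in the level reduction, the integrality of the translates $F|_k\gamma$ together with the passage from $\widetilde F\equiv0$ to $\overline F=0$ — is routine but must be carried out carefully for the constant $\tfrac{1}{10}$ to come out as stated.
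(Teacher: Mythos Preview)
Your level-one argument is essentially the paper's: restrict to the diagonal via the Witt operator, use Sturm to kill $W(F)$ modulo $p$, invoke Nagaoka's lemma (Lemma~\ref{Na-lem}) to divide by $\chi_{10}$, and iterate. The paper reaches $W(F)\equiv0$ slightly differently --- it first applies the Sturm bound for \emph{Jacobi} forms (Lemma~\ref{Jacobi}) to kill the Fourier--Jacobi coefficients $\phi_m$ for $m\le k/10$ outright, and then reads off $W(F)\equiv0$ from the triangular basis $\{\Delta^i E_4^j E_6^t\}$ --- but your double-elliptic-Sturm route is equally valid, and the index shift under division by $\chi_{10}$ works just as you describe.

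The level reduction is where your proposal diverges from the paper, and here there is a real gap. You flag the $p$-integrality of the translates $F|_k\gamma$ as ``routine bookkeeping''; it is not, and the paper deliberately works around it. Shimura's result only places the Fourier coefficients of $F|_k\gamma$ in $\mathbb{Q}(\mu_N)$, with no control on $p$-denominators (and none should be expected without a hypothesis like $p\nmid N$, which the theorem does not impose). Without that integrality, neither your transfer of the box hypothesis to $\widetilde F$ nor your integral-domain factorization argument goes through: if $\prod_{j\ge2}F|_k\gamma_j$ carries a negative $p$-valuation somewhere, multiplying by a coefficient of $F$ that is merely $\equiv0\pmod p$ need not yield something $\equiv0\pmod p$, and reducing the factors individually makes no sense.

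The paper circumvents this entirely. It first applies the Jacobi Sturm bound to $F$ itself, converting the two-sided box hypothesis into the one-sided condition $\mathrm{ord}_p(F)>\tfrac{ki}{10}$. It then forms $\Psi=F\prod_{j\ge2}F|_k\gamma_j$, takes a Galois trace over $\mathrm{Gal}(\mathbb{Q}(\mu_N)/\mathbb{Q})$ (after a twist by $\lambda$ chosen to guarantee nonvanishing) to land in $M_{ki}(\Gamma_2)_{\mathbb{Q}}$, and finally rescales by a rational constant $C$ so that the cofactor $G=C\sum_\sigma(\lambda\prod_{j\ge2}F|_k\gamma_j)^\sigma$ has $v_p(G)=0$. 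Now $C\Phi=F\cdot G$ is $p$-integral \emph{by construction}, multiplicativity gives $\mathrm{ord}_p(C\Phi)=\mathrm{ord}_p(F)+\mathrm{ord}_p(G)>\tfrac{ki}{10}$, Proposition~\ref{level1} forces $\mathrm{ord}_p(C\Phi)=\infty$, and since $\mathrm{ord}_p(G)<\infty$ one concludes $\mathrm{ord}_p(F)=\infty$. The Jacobi Sturm step is what makes this work: it turns the box hypothesis on $F$ into an $\mathrm{ord}_p$ inequality that survives multiplication by any $p$-integral cofactor, so no integrality of the individual translates is ever required.
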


\begin{rem}\label{PY}  \begin{enumerate} \item The above bounds in Theorem \ref{main1} are sharp for the case that the level is one, and this will be discussed in section \ref{example}.

\item The results of Poor and Yuen in \cite{P-Y} are stated in term of the dyadic trace. For example, when the level is one, their results implies the following:

\noindent {\bf{Theorem}}.  \cite{P-Y} Let $F=\sum A(T)e^{\pi i tr(TZ)}\in M_k^{(2)}$ be such
    that for all $T$ with dyadic trace $w(T)\leq \frac{k}{3}$ one has that $A(T) \in {\mathbb{Z}}_{(p)}$ and $A(T)\equiv 0\pmod{p}.$ Then $F\equiv
    0\pmod{p}.$

\end{enumerate} \end{rem} \medskip

We apply Theorem \ref{main1} to study congruences involving an analogue of Atkin's $U(p)$-operator. For elliptic modular forms congruences, involving
Atkin's $U(p)$-operator were studied with important applications in the context of traces of singular moduli and class equations (see Ahlgren and Ono
\cite{A-O}, Elkies, Ono, and Yang \cite{E-O-Y}, and chapter $7$ of Ono \cite{O}). Recently analogues of the results for Siegel modular forms were
studied in \cite{C-C-R}. Using Theorem \ref{main1}, we improve the result in \cite{C-C-R}.

Before stating main theorem we set up the notation: $$ \widetilde{M}_k:=\left\{F \hspace{1ex}(\mbox{mod } p) \,:\, F(Z)= \sum a(T)e^{\pi i\,tr(TZ)}\in
M_k(\Gamma_2) \,\mbox{ where } a(T)\in\Z_{(p)}\right\}, $$ where $\Z_{(p)}:=\Z_p\cap\Q$ denotes the local ring of $p$-integral rational numbers. For
Siegel modular forms with $p$-integral rational coefficients, we define their filtrations modulo $p$ by \begin{equation*} 
\omega\big(F\big):=\inf\left\{k\,:\, F \hspace{1ex}(\mbox{mod } p)\,\in \widetilde{M}_k\right\}. \end{equation*} The differential operator
$\mathbb{D}=(2\pi)^{-2}(4\frac{\partial }{\partial \tau}\frac{\partial}{\partial \tau'}-\frac{\partial^2}{\partial z^2}), Z= \left(\begin{array}{cc} \tau & z\\ z&
\tau'\end{array}\right),$ which acts on Fourier expansions of Siegel modular forms as $$\mathbb{D}(\sum a(T)e^{\pi i\,tr(TZ)}) =\sum  \det(T) a(T)e^{\pi
i\,tr(TZ)}.$$

\begin{thr}\label{main2} Let \begin{equation*} F(Z)= \sum_{\begin{smallmatrix}T=\trans{T}\geq 0 \\ T\, even\end{smallmatrix}}a(T)e^{\pi i\,tr(TZ)}=
\sum_{\begin{smallmatrix} n,r,m \in \mathbb{Z}\\ n,m,4nm-r^2\geq 0 \end{smallmatrix}}A(n,r,m)q^n \xi^r q'^m \end{equation*} be a Siegel modular form
of degree $2$, even weight $k$, level 1, and with $p$-integral rational coefficients, where $p>k$ is a prime.  Let \begin{equation*} F(Z)\, \big | \, U(p)
:=\sum_{\begin{smallmatrix}T=\trans{T}\geq 0 \\ T\, even\\p \, \,\mid \, \det T\end{smallmatrix}}a(T)e^{\pi i\,tr(TZ)} \end{equation*} be the analog
of Atkin's $U$-operator for Siegel modular forms. If $p>2k-5$ and $F \not \equiv 0 \pmod{p}$, then $F \, \big | \, U(p) \not\equiv 0 \hspace{1ex}
(\mbox{mod } p)$. If $k<p<2k-5$, then \begin{equation*} \omega\left(\mathbb{D}^{\frac{3p+3}{2}-k}(F)\right)=\left\{\begin{array}{cc} 3p-k+3 &
\mbox{if} \hspace{2ex} F \, \big | \, U(p) \not\equiv 0 \hspace{1ex} (\mbox{mod } p).\\ 2p-k+4 & \mbox{if} \hspace{2ex}  F \, \big | \, U(p) \equiv 0
\hspace{1ex} (\mbox{mod } p),\end{array}.\right. \end{equation*} \end{thr}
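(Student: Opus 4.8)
The engine of the proof is the reduction of the Siegel $U(p)$ operator to the theta operator $\mathbb{D}$. Since $\det(T)^{p-1}\equiv1\pmod p$ exactly when $p\nmid\det T$, Fermat's little theorem gives $\mathbb{D}^{p-1}(F)\equiv F-F|U(p)\pmod p$. Every Fourier index of $F|U(p)$ satisfies $p\mid\det T$, so $\mathbb{D}^{i}(F|U(p))\equiv0\pmod p$ for $i\ge1$; applying $\mathbb{D}^{i}$ to the displayed congruence yields the periodicity $\mathbb{D}^{p-1+i}(F)\equiv\mathbb{D}^{i}(F)\pmod p$ for all $i\ge1$. Thus the iterates $\mathbb{D}^{i}(F)$ run through a \emph{theta cycle} of length dividing $p-1$, and, crucially, this cycle closes onto $F$ (that is, $\mathbb{D}^{p-1}(F)\equiv F$) if and only if $F|U(p)\equiv0\pmod p$. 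This equivalence is precisely the dichotomy appearing in the statement.

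The second ingredient is the filtration behaviour of $\mathbb{D}$ in degree two. I would use the known weight increase of the degree-two theta operator, namely that $\mathbb{D}$ carries a mod $p$ form of weight $\ell$ to one of weight $\ell+p+1$, so that $\omega(\mathbb{D}G)\le\omega(G)+p+1$; together with the weight $p-1$ Siegel Eisenstein series that is $\equiv1\pmod p$ (which forces every filtration drop to be a multiple of $p-1$) this gives $\omega(\mathbb{D}^{i}F)\equiv k+2i\pmod{p-1}$. One clean way to secure the sharp form of this estimate is through the Witt operator: restricting $Z$ to the diagonal compares $\mathbb{D}$ with a product of two elliptic theta operators, transporting the well-understood elliptic theta-cycle structure to the Siegel setting. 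Taking $i=j:=\tfrac{3p+3}{2}-k$ gives $\omega(\mathbb{D}^{j}F)\equiv6-k\pmod{p-1}$, so the only candidate values near the bottom of the cycle are $3p-k+3$ and $2p-k+4$, which differ by exactly $p-1$; the exponent $j$ is tuned so that $\mathbb{D}^{j}F$ lands at the low point of the cycle, and the task is to decide which candidate is attained.

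For the first assertion I argue by contradiction. Suppose $p>2k-5$ and $F|U(p)\equiv0$, so the cycle closes and $F\equiv\mathbb{D}(\mathbb{D}^{p-2}F)$ lies in the image of $\mathbb{D}$ modulo $p$. Since $p>k$ we have $\omega(F)=k$; feeding this through the filtration relations around a \emph{closed} cycle shows that a nonzero form in the image of $\mathbb{D}$ cannot have filtration as small as $k$ once $p>2k-5$, whence $F\equiv0$, a contradiction. The hypothesis $p>2k-5$ enters exactly here, at the boundary $j=p-1$ that separates this regime from the range $k<p<2k-5$ of the second assertion. For that second assertion I would compute the value at the low point directly: when $F|U(p)\equiv0$ the cycle additionally passes through the genuine minimum $k$ at $i=0$, which absorbs one further drop of $p-1$ and produces $2p-k+4$, whereas when $F|U(p)\not\equiv0$ that minimum is never reached and the value is $3p-k+3$. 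At this stage Theorem~\ref{main1} is the tool that certifies the outcome: it guarantees that $\mathbb{D}^{j}F\not\equiv0\pmod p$ and that its filtration is no smaller than claimed, by checking that the relevant Fourier coefficients do not all vanish within the Sturm range $\tfrac{k}{10}[\Gamma_2:\Gamma]$ (here with level one).

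The main obstacle is the exact drop-counting in the degree-two theta cycle. In contrast to the elliptic case, where the one or two large filtration drops are pinned down by Jochnowitz-type analysis, here one must control $\omega(\mathbb{D}^{i}F)$ as an exact integer rather than merely modulo $p-1$, and in particular rule out any \emph{extra} drop occurring strictly between the top of the cycle and the index $j$. Securing the sharp filtration estimate for $\mathbb{D}$ in degree two — so that the generic step raises the weight by the full $p+1$ and the drops occur only at the predicted points — is the technical heart of the argument, and it is there that the level one hypothesis, the Witt operator, and the sharp Sturm bound of Theorem~\ref{main1} must be combined.
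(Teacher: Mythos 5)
Your outline reproduces the skeleton of the theta-cycle argument (the identity $\mathbb{D}^{p-1}(F)\equiv F-F\,|\,U(p)\pmod p$, the periodicity $\mathbb{D}^{p-1+i}(F)\equiv\mathbb{D}^{i}(F)$ for $i\ge 1$, and the dichotomy according to whether the cycle closes), and that skeleton is sound. But the proof is not complete: everything that actually decides the two cases --- the exact amount by which $\mathbb{D}$ raises filtrations in degree two, the location and size of the filtration drops around the cycle, and the claim that a nonzero form of filtration $k$ cannot lie in the image of $\mathbb{D}$ once $p>2k-5$ --- is asserted rather than proved, and you yourself flag the ``exact drop-counting'' as an unresolved obstacle. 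These are precisely the hard parts; without them the candidate values $3p-k+3$ and $2p-k+4$ are pinned down only modulo $p-1$, which does not prove the theorem.

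The paper does not redo any of this analysis. The entire theta-cycle and filtration machinery is imported wholesale as Theorem 1 of \cite{C-C-R}, which establishes the statement conditionally on a single hypothesis: that $A(n,r,m)\equiv 0\pmod p$ for every $(n,r,m)$ with $p\nmid nm$ forces $F\equiv 0\pmod p$. The sole new content of the proof given here is the verification of that hypothesis via Theorem \ref{main1}: since $p>k$ one has $\tfrac{k}{10}<p$, so the Sturm box $0\le n,m\le\tfrac{k}{10}$ lies inside the range $n,m<p$ where the hypothesis kills the coefficients, and Theorem \ref{main1} then yields $F\equiv 0$. Your proposal instead assigns Theorem \ref{main1} a vaguer certifying role late in the argument and never isolates this reduction. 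To make your self-contained route work you would have to supply the degree-two analogues of the Jochnowitz/Tate theta-cycle lemmas (the content of \cite{C-C-R} and of the B\"ocherer--Nagaoka results on $\omega(\mathbb{D}F)$); as written, the proposal has a genuine gap exactly there.
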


\section{\bf{Proofs of main theorems}}\label{proofs} Let $E^{(2)}_4, E^{(2)}_6, \chi_{10}$, and $\chi_{12}$ denote the usual generators (see \cite{Ig}) of
$M_k(\Gamma_2)$ of weights $4$, $6$, $10$, and $12$, respectively, where $E^{(2)}_k(Z)$ is the normalized Eisenstein series of weight $k$ and genus
$2$ ($E^{(2)}_4$ and $E^{(2)}_6$ are normalized by $a\left(\begin{smallmatrix} 0 & 0 \\ 0 & 0\end{smallmatrix}\right)=1$) and where the cusp forms
$\chi_{10}$ and $\chi_{12}$ are normalized by $a\left(\begin{smallmatrix} 2 & 1 \\ 1 & 2\end{smallmatrix}\right)=1$.  Let $p$ be a prime. Nagaoka
\cite{N}, and B\"ocherer and Nagaoka \cite{B-N} investigate Siegel modular forms modulo $p$.

Consider the Witt operator $W:M_k(\Gamma_2)\rightarrow M_k(\Gamma_1)\otimes M_k(\Gamma_1),$ defined by $$W(F)(\tau,\tau'):=F(\left(\sm \tau & 0 \\ 0
& \tau'\esm\right)), \; (\tau,\tau')\in\mathbb{H}\times\mathbb{H}.$$
 For
example, note that(see \cite{N}) \begin{align*} &W(E^{(2)}_4)(\tau,\tau')=E^{(1)}_4(\tau)E^{(1)}_4(\tau')\\
&W(E^{(2)}_6)(\tau,\tau')=E^{(1)}_6(\tau)E^{(1)}_6(\tau')\\ &W(\chi_{10})(\tau,\tau')=0, \;\; W(\chi_{12})=\Delta(\tau)\Delta(\tau'). \end{align*}

\begin{lem}[Corollary 4.2 in \cite{N}]\label{Na-lem} Let p be a prime number. If $F\in M_k(\Gamma_2)_{\mathbb{Z}_{(p)}}$ satisfies $W(F)\equiv 0
\pmod{p}$, then $\frac{F}{\chi_{10}} \in M_{k-10}(\Gamma_2)_{\mathbb{Z}_{(p)}}$. \end{lem}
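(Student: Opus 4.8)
The plan is to exploit that $W$ is a ring homomorphism (restriction to the diagonal is multiplicative) with $W(\chi_{10})=0$, together with the fact that over $\mathbb{Q}$ the space $M_k(\Gamma_2)$ is spanned by Igusa's monomials in $E^{(2)}_4,E^{(2)}_6,\chi_{10},\chi_{12}$. After splitting off the powers of $\chi_{10}$, the lemma reduces to a vanishing statement modulo $p$ for the ``$\chi_{10}$-free'' part of $F$. I take $k$ even, which is the only case used (so that no $\chi_{35}$ occurs).

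First I would invoke the $\mathbb{Z}_{(p)}$-integral structure of the ring of Siegel modular forms: for $p\ge5$ the monomials $(E^{(2)}_4)^a(E^{(2)}_6)^b\chi_{10}^c\chi_{12}^d$ form a $\mathbb{Z}_{(p)}$-basis, so the hypothesis $F\in M_k(\Gamma_2)_{\mathbb{Z}_{(p)}}$ lets me write $F=\chi_{10}F_1+G$ with $F_1\in M_{k-10}(\Gamma_2)_{\mathbb{Z}_{(p)}}$ and $G=\sum_{4a+6b+12c=k}\lambda_{a,b,c}(E^{(2)}_4)^a(E^{(2)}_6)^b\chi_{12}^c$, all $\lambda_{a,b,c}\in\mathbb{Z}_{(p)}$. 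Applying $W$ and using $W(\chi_{10})=0$ gives $W(F)=W(G)$, so the hypothesis becomes $W(G)\equiv0\pmod p$. Once I show $G\equiv0\pmod p$ we get $F\equiv\chi_{10}F_1\pmod p$ with $F_1$ the required weight $k-10$ form, i.e. $\tfrac{F}{\chi_{10}}\equiv F_1\pmod p$. Thus everything comes down to proving: $W(G)\equiv0\pmod p\Rightarrow G\equiv0\pmod p$.

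For this crux I would argue with $q$-orders rather than with algebraic independence of the images (which fails modulo $p$). From the computed values of $W$ on the generators,
\[
W(G)=\sum_{a,b,c}\lambda_{a,b,c}\big((E^{(1)}_4)^a(E^{(1)}_6)^b\Delta^c\big)(\tau)\big((E^{(1)}_4)^a(E^{(1)}_6)^b\Delta^c\big)(\tau'),
\]
and each $(E^{(1)}_4)^a(E^{(1)}_6)^b\Delta^c$ has $q$-expansion $q^c+O(q^{c+1})$ with unit leading coefficient. Suppose $G\not\equiv0\pmod p$ and let $c_0$ be the least $c$ for which some $\lambda_{a,b,c}\not\equiv0\pmod p$. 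Extracting the coefficient of $q^{c_0}$ in $\tau$ from $W(G)\equiv0\pmod p$, the terms with $c>c_0$ do not contribute (their $q$-order exceeds $c_0$) and those with $c<c_0$ vanish modulo $p$; what remains is $\Delta(\tau')^{c_0}\sum_{4a+6b=k-12c_0}\lambda_{a,b,c_0}(E^{(1)}_4)^a(E^{(1)}_6)^b(\tau')\equiv0\pmod p$. Since $\Delta^{c_0}$ is not a zero divisor modulo $p$, the second factor vanishes modulo $p$; but for $p\ge5$ the monomials $(E^{(1)}_4)^a(E^{(1)}_6)^b$ of the fixed weight $k-12c_0$ are the standard $\mathbb{Z}[1/6]$-basis of $M_{k-12c_0}(\Gamma_1)$ and hence reduce to $\mathbb{F}_p$-linearly independent elements. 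Thus all $\lambda_{a,b,c_0}\equiv0\pmod p$, contradicting the choice of $c_0$. Therefore $G\equiv0\pmod p$, as needed.

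The main obstacle is precisely this crux step: the Witt images $(E^{(1)}_4)^a(E^{(1)}_6)^b\Delta^c$ are linearly dependent already over $\mathbb{C}$, so one cannot simply quote algebraic independence modulo $p$. The device that makes it work is filtering by the exact $q$-order $c$ of $\Delta^c$ and peeling off the lowest $c$, which reduces everything to linear independence of monomials within a single weight, where reduction modulo $p$ is harmless for $p\ge5$. The remaining technical input to be cited is the $\mathbb{Z}_{(p)}$-integrality of Igusa's generators used in the splitting; for the small primes $p=2,3$ both this and the single-weight independence would require separate attention, but the intended applications only use $p\ge5$.
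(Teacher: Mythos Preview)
The paper does not prove this lemma; it is quoted as Corollary~4.2 of Nagaoka's paper and used as a black box. Your argument is correct for $p\geq 5$, which is all that the paper needs, and in fact it reconstructs the essential content of Nagaoka's proof: decompose $F$ in Igusa's generators over $\mathbb{Z}_{(p)}$, split off the $\chi_{10}$-part, and show the $\chi_{10}$-free part $G$ dies modulo $p$ by analysing $W(G)$. Your filtration by the $\Delta$-order in $\tau$ is the right device to bypass the $\mathbb{C}$-linear dependence among the Witt images, reducing the question to the $\mathbb{F}_p$-linear independence of the monomials $(E^{(1)}_4)^a(E^{(1)}_6)^b$ in a single weight; this holds because $M_*(\Gamma_1)_{\mathbb{Z}_{(p)}}=\mathbb{Z}_{(p)}[E_4,E_6]$ is a free polynomial ring for $p\geq5$, so each graded piece is free on those monomials and remains so after reduction.

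One remark on the conclusion: what you actually establish is $F\equiv \chi_{10}F_1\pmod p$ for some $F_1\in M_{k-10}(\Gamma_2)_{\mathbb{Z}_{(p)}}$, not the literal assertion that $F/\chi_{10}$ is a holomorphic Siegel form with $p$-integral coefficients. The literal reading can fail under the hypothesis (e.g.\ $F=p\cdot E^{(2)}_4\chi_{12}$ has $W(F)\equiv 0\pmod p$ but $F/\chi_{10}$ is not holomorphic). The congruence version is exactly how the paper invokes the lemma in the proof of the level-one case, so this is a harmless looseness in the statement rather than a gap in your argument.
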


We recall the Sturm's formula for Jacobi forms \cite{C-C}.

\begin{lem}[Theorem 1.3 in \cite{C-C}]\label{Jacobi} Suppose that $k$ is even and
 $$\Psi_{k,m}(\tau,z)=\sum_{r^2 \leq 4mn}a(n,r)q^n\zeta^r \in
\mathcal{O}_L[\zeta,\zeta^{-1}][[q]] \cap J_{k,m}(\Gamma).$$ Here $J_{k,m}(\Gamma) $ denotes the space of Jacobi forms of weight $k$ and index $m.$
 Let $\beta$ denote a prime ideal of $\mathcal{O}_L$ over a prime $p
\geq 5$. If $$ord_1[\beta]\left( \sum_{r^2 \leq 4mn}a(n,r)\zeta^r \right) \geq 2m+1$$ for  $0 \leq n \leq \frac{1}{12}(k+2m)[\Gamma(1): \Gamma],$
then
 $\Psi_{k,m}(\tau,z) \equiv 0 \pmod{\beta}.$
\end{lem}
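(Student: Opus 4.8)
The plan is to reduce the Sturm bound for the Jacobi form $\Psi_{k,m}$ to the classical Sturm bound for elliptic modular forms (recalled in the introduction) by exploiting the ring structure of weak Jacobi forms. Recall from Eichler--Zagier that the bigraded ring of weak Jacobi forms is freely generated over the ring of elliptic modular forms by two forms $\phi_{-2,1}$ and $\phi_{0,1}$ of weights $-2,0$ and index $1$, both with integral Fourier coefficients. Writing $\zeta=e^{2\pi i z}$, these satisfy $\phi_{-2,1}(\tau,z)=(\zeta-2+\zeta^{-1})+O(q)$ and $\phi_{0,1}(\tau,z)=(\zeta+10+\zeta^{-1})+O(q)$; in particular $\phi_{-2,1}$ vanishes to order exactly $2$ at $z=0$ with unit leading coefficient, while $\phi_{0,1}(\tau,0)=12+O(q)$ is a unit modulo $\beta$ precisely because $p\ge 5$ (this is where the hypothesis $p\ge5$ enters). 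First I would record the meaning of the hypothesis: the condition $\mathrm{ord}_1[\beta]\bigl(\sum_r a(n,r)\zeta^r\bigr)\ge 2m+1$ says that the reduction modulo $\beta$ of the $q^n$-coefficient of $\Psi_{k,m}$, viewed as a Laurent polynomial in $\zeta$, vanishes to order $\ge 2m+1$ at $\zeta=1$. Since $\zeta^r=\sum_j\binom{r}{j}(\zeta-1)^j$ with integer coefficients, this is equivalent to the vanishing of all binomial moments $b_j(n):=\sum_r a(n,r)\binom{r}{j}$, $0\le j\le 2m$, modulo $\beta$, for every $n$ in the range $0\le n\le\frac1{12}(k+2m)[\Gamma(1):\Gamma]$.

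Next I would express $\Psi_{k,m}$ through the generators, $\Psi_{k,m}=\sum_{a=0}^{m} f_a\,\phi_{-2,1}^{\,a}\,\phi_{0,1}^{\,m-a}$ with $f_a\in M_{k+2a}(\Gamma)$ having $\beta$-integral coefficients. Because $\phi_{-2,1}^{\,a}\phi_{0,1}^{\,m-a}$ vanishes to order exactly $2a$ at $z=0$ with leading coefficient a unit modulo $\beta$, comparing the $(\zeta-1)^{2a}$-Taylor coefficients at $z=0$ yields a lower-triangular system, with unit diagonal over the localization $\mathcal{O}_{L,\beta}$, relating $(f_0,\dots,f_m)$ to the binomial moments $b_j(n)$. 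Inverting this triangular system expresses each Fourier coefficient of $f_a$ as a $\beta$-integral convolution of the $b_j(n')$ (with $j\le 2m$, $n'\le n$) against integral power series. Since the $b_j(n')$ vanish modulo $\beta$ for all $n'$ in the Sturm range, the same holds for the $q^n$-coefficients of every $f_a$ there: $a_{f_a}(n)\equiv 0\pmod\beta$ for $0\le n\le\frac1{12}(k+2m)[\Gamma(1):\Gamma]$.

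To finish I would apply the scalar Sturm bound of \cite{S}. Each $f_a$ is an elliptic modular form on $\Gamma$ of weight $k+2a\le k+2m$, so its Sturm bound $\frac1{12}(k+2a)[\Gamma(1):\Gamma]$ does not exceed $\frac1{12}(k+2m)[\Gamma(1):\Gamma]$; the vanishing just established therefore lies inside the scalar Sturm range, and Sturm's theorem gives $f_a\equiv 0\pmod\beta$ for every $a=0,\dots,m$. Consequently $\Psi_{k,m}=\sum_a f_a\phi_{-2,1}^{\,a}\phi_{0,1}^{\,m-a}\equiv 0\pmod\beta$, which is the assertion. Note that the largest weight $k+2m$ occurring is exactly what produces the stated bound $\frac1{12}(k+2m)[\Gamma(1):\Gamma]$, and that the number of generators $m+1$ matches the order $2m+1$ of vanishing required at $\zeta=1$.

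The main obstacle I anticipate is controlling this algebra integrally and modulo $\beta$, rather than over $\mathbb{C}$. Two points need care. First, that the decomposition $\Psi_{k,m}=\sum_a f_a\phi_{-2,1}^{\,a}\phi_{0,1}^{\,m-a}$ can be taken with $\beta$-integral modular coefficients $f_a\in M_{k+2a}(\Gamma)$ on the given congruence subgroup: for level one this is the classical structure theorem, while for general $\Gamma$ one must check that the index-$m$ Jacobi forms on $\Gamma$ still form a free $M_*(\Gamma)$-module of rank $m+1$ on these generators, or equivalently invoke the modularity of the Eichler--Zagier development coefficients on $\Gamma$. Second, that the triangular change of basis is invertible over $\mathcal{O}_{L,\beta}$, which hinges on the constant term $\phi_{0,1}(\tau,0)\equiv 12\pmod\beta$ being a unit, and hence on $p\ge5$. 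Once these integral statements are secured, the reduction to the scalar Sturm bound is routine.
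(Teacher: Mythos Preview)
The paper does not actually prove this lemma; it is quoted verbatim from \cite{C-C} (Choi--Choie) and used as a black box in the proof of Proposition~\ref{level1}. The only hint given in the present paper is the sentence in the introduction: ``Using the ring structure of weak Jacobi forms, the first two authors in \cite{C-C} derived the Sturm's formula in the case of Jacobi forms.'' Your proposal follows precisely this strategy --- decompose $\Psi_{k,m}$ in the Eichler--Zagier basis $\phi_{-2,1}^{\,a}\phi_{0,1}^{\,m-a}$, read off the elliptic modular coefficients $f_a$ from the Taylor expansion at $z=0$ via a unit-triangular system (units because $p\ge 5$), and then apply the classical Sturm bound to each $f_a$ --- and this is indeed the argument of \cite{C-C}. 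So your approach is correct and coincides with the one the paper is implicitly invoking.

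The two caveats you flag are exactly the right ones. The integrality of the $f_a$ over $\mathcal{O}_{L,\beta}$ follows from the triangular inversion itself (no separate integral structure theorem is needed: one successively solves for $f_0,f_1,\ldots$ by peeling off Taylor coefficients, and the diagonal entries are powers of $12$ times units in $\mathcal{O}_{L,\beta}[[q]]$). The extension to a congruence subgroup $\Gamma$ works because $\phi_{-2,1},\phi_{0,1}$ are on the full Jacobi group, so the same basis decomposes $J_{k,m}(\Gamma)$ over $M_*(\Gamma)$; equivalently, the Taylor-coefficient maps $D_z^{2j}|_{z=0}$ land in $M_{k+2j}(\Gamma)$ for any $\Gamma$, which is all you use.
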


With the above lemmata, we prove Theorem \ref{main1} for the case that level is 1.

\begin{prop}\label{level1} Suppose that $\Gamma=\Gamma_2$. Then Theorem \ref{main1} is true. \end{prop}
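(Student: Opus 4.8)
The plan is to prove the proposition by induction on the weight $k$, lowering the weight by $10$ at each step via division by the cusp form $\chi_{10}$. Throughout I work with the Fourier--Jacobi expansion $F=\sum_{m\ge 0}\phi_m(\tau,z)\,q'^m$, where $\phi_m=\sum_{n,r}A(n,r,m)q^n\xi^r\in J_{k,m}(\Gamma_1)$; the hypothesis is that $A(n,r,m)\equiv 0\pmod p$ for all $r$ whenever $0\le n,m\le k/10$. The base cases are the weights $k\le 8$: there $k/10<1$, the condition $4nm-r^2\ge 0$ with $n=m=0$ forces $r=0$, so the only constrained coefficient is $A(0,0,0)$; since $M_k(\Gamma_2)$ is then either $0$ or spanned by a single monomial in $E^{(2)}_4,E^{(2)}_6$ whose $(0,0,0)$-coefficient equals $1$, the congruence $A(0,0,0)\equiv 0\pmod p$ already forces $F\equiv 0\pmod p$.

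For the inductive step ($k\ge 10$) I would first show $W(F)\equiv 0\pmod p$. Writing $W(F)(\tau,\tau')=\sum_{n,m}c(n,m)q^nq'^m$ with $c(n,m)=\sum_r A(n,r,m)$, the hypothesis gives $c(n,m)\equiv 0\pmod p$ for $0\le n,m\le k/10$. Since $W(F)\in M_k(\Gamma_1)\otimes M_k(\Gamma_1)$, I expand it as $\sum_m h_m(\tau)q'^m$ with $h_m\in M_k(\Gamma_1)$ having $p$-integral coefficients; for each fixed $m\le k/10$ the classical Sturm bound (note $k/12\le k/10$) applied to $h_m$ gives $h_m\equiv 0\pmod p$, hence $c(n,m)\equiv 0$ for all $n$. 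Fixing $n$ and running the same one-variable Sturm argument in the $\tau'$-variable then yields $c(n,m)\equiv 0\pmod p$ for all $n,m$, i.e. $W(F)\equiv 0\pmod p$. By Lemma~\ref{Na-lem} the quotient $G:=F/\chi_{10}$ lies in $M_{k-10}(\Gamma_2)_{\mathbb{Z}_{(p)}}$, and $F=\chi_{10}G$ as honest forms with $p$-integral coefficients.

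It remains to verify that $G$ again satisfies the hypothesis in weight $k-10$, so that induction gives $G\equiv 0\pmod p$ and hence $F=\chi_{10}G\equiv 0\pmod p$. First, for $0\le m\le k/10$ one has $\tfrac1{12}(k+2m)\le k/10$ (indeed $\tfrac1{12}(k+2m)\le k/10\iff m\le k/10$), so Lemma~\ref{Jacobi} applied to $\phi_m\in J_{k,m}(\Gamma_1)$, together with the hypothesis $A(n,r,m)\equiv 0$ for $0\le n\le k/10$, yields $\phi_m\equiv 0\pmod p$ for every $0\le m\le k/10$. Next I transfer this vanishing to $G$. Writing $\chi_{10}=\sum_{j\ge 1}\chi_j q'^j$ and $G=\sum_{i\ge 0}g_i q'^i$, the normalization $a\left(\begin{smallmatrix}2&1\\1&2\end{smallmatrix}\right)=1$ shows that the leading coefficient $\chi_1$ contains the monomial $q\xi$ and is a nonzero element of $\Fp[[q]][\xi,\xi^{-1}]$. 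Comparing Fourier--Jacobi coefficients in $F=\chi_{10}G$ gives $\phi_{i+1}=\chi_1 g_i+\sum_{j\ge 2}\chi_j g_{i+1-j}$; inducting on $i$, with $g_0,\dots,g_{i-1}\equiv 0$ already known and $\phi_{i+1}\equiv 0\pmod p$ for $i+1\le k/10$, this collapses to $\chi_1 g_i\equiv 0\pmod p$. Since $\Fp[[q]][\xi,\xi^{-1}]$ is an integral domain and $\chi_1\not\equiv 0$, I conclude $g_i\equiv 0\pmod p$ for $0\le i\le (k-10)/10$, which is precisely the hypothesis for $G$.

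The main obstacle is this final transfer step. The numerology must be checked exactly, since it is the identity $\tfrac1{12}(k+2m)\le k/10\iff m\le k/10$ that matches the Jacobi--Sturm range of Lemma~\ref{Jacobi} to the available data and pins down the constant $1/10$. Equally delicate is the cancellation: one must know that the leading Fourier--Jacobi coefficient of $\chi_{10}$ is a non-zero-divisor modulo $p$, which I handle by passing to $q,\xi$-expansions in the domain $\Fp[[q]][\xi,\xi^{-1}]$, and one must organize the induction on the Fourier--Jacobi index so that all lower terms are already known to vanish before dividing by $\chi_1$. By contrast, the two Sturm inputs (the classical bound in the Witt step and Lemma~\ref{Jacobi}) are routine once this bookkeeping is in place.
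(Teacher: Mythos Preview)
Your argument is correct and follows essentially the same route as the paper's proof: use the Jacobi--Sturm bound (Lemma~\ref{Jacobi}) to kill $\phi_m$ for $m\le k/10$, show $W(F)\equiv 0\pmod p$, apply Lemma~\ref{Na-lem} to peel off a factor of $\chi_{10}$, and iterate; the paper phrases the iteration explicitly rather than as an induction on $k$, but the mechanism is identical. One small redundancy: once you know $\phi_m\equiv 0$ for $m\le k/10$ from Lemma~\ref{Jacobi}, setting $z=0$ already gives $h_m\equiv 0$ for those $m$, so the first of your two one-variable Sturm applications in the Witt step is subsumed by the Jacobi step (this is how the paper organizes it), though your separate double-Sturm argument is of course still valid.
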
 \begin{proof} Suppose that $A(n,r,m)\equiv 0
\pmod{p}$ for every $n$, $m$ such that $0\leq n \leq \frac{k}{10}$ and $0\leq m \leq \frac{1}{10}k$. Then, $A(n,r,m)\equiv 0 \pmod{p}$ for every $n$,
$m$ such that $0\leq n \leq \frac{1}{12}(k+2m)$ and $0\leq m \leq \frac{1}{10}k$. Let $\displaystyle F(\tau,z,\tau')= \sum_{m=0}^{\infty}
\phi_m(\tau,z) e^{2\pi i m\tau'}$ be the Fourier-Jacobi expansion of $F$ such that $\phi_m$ is a Jacobi form of weight $k$ and index $m$. For every
$m$ such that $0\leq m \leq \frac{1}{10}k$, Lemma \ref{Jacobi} implies that $\phi_m\equiv 0 \pmod{p}.$ Thus, we have that \begin{equation}\label{1.1}
A(n,r,m)\equiv0 \pmod{p} \; \; \text{ if } 0\leq m \leq \frac{1}{10}k. \end{equation} Let $W(F)$ be the Witt operator defined as
$$W(F(Z))=F(\tau,0,\tau').$$ Then we have $$W(F(Z))=\sum_{\alpha=1}^{\beta}f_{\alpha}(\tau)f_{\alpha}(\tau'),$$ where $f_{\alpha}$ is a modular form
of weight $k$ on $SL_2(\mathbb{Z})$. From the ring structure of modular forms on $SL_2(\mathbb{Z})$, we have \begin{align*} W(F(Z))=\sum_{ \sm
12i+4j+6t=k\\ t=0,1 \esm }f_{(i)}(\tau)\Delta(\tau')^i E_4(\tau')^j E_6(\tau')^t. \end{align*} Note that the $q$-expansion of $\Delta(\tau')^i
E_4(\tau')^j E_6(\tau')^t$ has the form $$\Delta(\tau')^i E_4(\tau')^j E_6(\tau')^t=(q')^i+\cdots$$ ans that $j$ and $t$ are uniquely determined by
choosing a value of $i$. Thus, Sturm's formula (\ref{1.1}) implies that if $i\leq \frac{k}{12}\leq \frac{k}{10}$, then $f_{(i)}(\tau)\equiv 0
\pmod{p}.$ Since $12i+4j+6t=k$, we have $W(F)\equiv 0 \pmod{p}$. Let $\chi_{10}$ be the unique (normalized by $a\left(\begin{smallmatrix} 2 & 1 \\ 1 &
2\end{smallmatrix}\right)=1$) Siegel cusp form of degree $2$ and weight $10$. From Lemma \ref{Na-lem} we have $$F\equiv F_{(1)}(Z)\chi_{10}(Z)
\pmod{p},$$ where $F_{(1)}(Z)$ is a Siegel modular form of weight $k-10$ and genus 2.

Let $\displaystyle F_{(1)}(\tau,z,\tau')= \sum_{m=0}^{\infty} \phi_m^{(1)}(\tau,z) e^{2\pi i m\tau'}$ be the Fourier-Jacobi expansion of $F_{(1)}$.
Note that $\chi_{10}(Z)$ has the form of the Fourier-Jacobi expansion $\sum_{m=1}^{\infty} \psi_m(\tau,z) e^{2\pi i m\tau'}$ such that $\psi_1(\tau,z)
\not \equiv 0 \pmod{p}$. Thus, we have that for every $m$, $0\leq m \leq \frac{1}{10}k-1$, $$\phi_m\equiv 0 \pmod{p}.$$ Note that $\frac{1}{10}k-1\geq
\frac{k-10}{12}$. Thus, following the previous argument, we have $$F_{(1)}\equiv F_{(2)}(Z)\chi_{10}(Z) \pmod{p},$$ where $F_{(2)}(Z)$ is a Siegel
modular form of weight $k-20$ and genus 2.

Repeating this argument, we have $$F_{(t)}\equiv  F_{(t+1)}(Z)\chi_{10}(Z)^t \pmod{p}$$ if $1 \leq t < t_0$ and $t_0=\left[\frac{k}{10}\right]+1$. Let
$\displaystyle F_{(t_0)}(\tau,z,\tau')= \sum_{m=0}^{\infty} \phi_m^{(t)}(\tau,z) e^{2\pi i m\tau'}$ be the Fourier-Jacobi expansion of $F_{(t_0)}$.
Since $\frac{1}{10}k-t>0$, we have $\phi_0^{(t)}\equiv 0 \pmod{p}$. Moreover, the weight of $F_{(t_0)}$  is less than 8, and
$\dim_{\mathbb{C}}M_k(\Gamma_2)\leq 1$ for $k\leq10$. Thus, we have $F_{(t_0)}\equiv 0 \pmod{p}$. Since $$F\equiv \chi_{10}^{t_0-1}F_{(t^0)}
\pmod{p},$$ this completes the proof. \end{proof}

For a subring $R\subset \mathbb{C}$, we denote by $M_k(\Gamma )_{R}$ the space of all of $f\in M_k(\Gamma )$ whose Fourier coefficients are in $R$. Let $p$ be a prime. We denote by $v_p$ the normalized additive valuation on $\mathbb{Q}$ (i.e. $v_p(p)=1$). For a Siegel modular form $F(\tau,z,\tau')=\sum_{\begin{smallmatrix}
n,r,m \in \frac{1}{N}\mathbb{Z}\\ n,m,4nm-r^2\geq 0 \end{smallmatrix}}A(n,r,m)q^n \xi^r q'^m \in M_k(\Gamma )_{\mathbb{Q}}$, we define
\begin{align*}
v_p(F):=\inf \left\{v_p(A(n,r,m))\:|\:n,r,m \in \frac{1}{N}\mathbb{Z},\ n,m,4nm-r^2\geq 0 \right \}.
\end{align*}

Assume that $v_p(F)\ge 0$, in other words $F\in M_k(\Gamma )_{\mathbb{Z}_{(p)}}$. Then we define an order of $F$ by
\begin{align*}
{\rm ord}_p(F)&:=\min\{m\:|\:\phi _{m}(\tau ,z)=\sum _{n,r}A(n,r,m)\xi ^rq^n\not\equiv 0 \pmod{p}\}\\ &=\min\{m\:|\: v_p(A(n,r,m))=0\ {\rm for}\ {\rm some}\ n,r \}.
\end{align*}
If $v_p(F)\ge 1$, then we define ${\rm ord}_p(F):=\infty $. Note that
\begin{align} \label{ord2} {\rm ord}_{p}(FG)={\rm ord}_{p}(F)+{\rm ord}_{p}(G). \end{align}
Following the similar argument of Sturm in \cite{S}, we complete the remained part of proof.

\begin{proof}[Proof of Theorem \ref{main1}] Let $i:=[\Gamma_2:\Gamma]$. Suppose that $A(n,r,m)\equiv 0 \pmod{p}$ for every $n$, $m$ such that $0\leq n \leq \frac{1}{10}ki$ and
$0\leq m \leq \frac{1}{10}ki$. Then, $A(n,r,m)\equiv 0 \pmod{p}$ for every $n$, $m$ such that $0\leq n \leq \frac{1}{12}(ki+2m)$ and $0\leq m \leq
\frac{1}{10}ki$. Lemma \ref{Jacobi} implies that ${\rm ord}_p(F)>\frac{1}{10}ki$.

We decompose $\Gamma _2$ as $\Gamma
_2=\cup _{j=1}^{i}\Gamma \gamma _j$, where $\gamma _1=1_2$. Let
\begin{align*}
\Psi :=F\prod _{j=2}^{i}F|_k\gamma _j\in M_{ki}(\Gamma _2)_{\mathbb{Q}(\mu _N)}\quad (\mu _N:=e^{\frac{2\pi i}{N}}).
\end{align*}
The fact that all Fourier coefficients of $\Psi $ are in $\mathbb{Q}(\mu _N)$ is according to Shimura's result \cite{Shi}. We take a constant $\lambda \in \mathbb{Q}(\mu _N)$ such that at least one of the non-zero Fourier coefficients of $\lambda \prod _{j=2}^{i}F|_k\gamma _j$ is in $\mathbb{Q}$. For example, we may take as $\lambda :=A_{\Psi }(n,r,m)^{-1}$ for a Fourier coefficient $A_{\Psi }(n,r,m)\neq 0$ of $\Psi$.

Moreover we consider
\begin{align*}
\Phi &:=\sum _{\sigma \in Gal(\mathbb{Q}(\mu _N)/\mathbb{Q})}(\lambda \Psi )^{\sigma }=\sum _{\sigma \in Gal(\mathbb{Q}(\mu _N)/\mathbb{Q})}(\lambda F \prod _{j=2}^{i}F|_k\gamma _j)^{\sigma }\\
&=F\sum _{\sigma \in Gal(\mathbb{Q}(\mu _N)/\mathbb{Q})}(\lambda \prod _{j=2}^{i}F|_k\gamma _j)^{\sigma },
\end{align*}
where $\sigma $ runs over all elements of $Gal(\mathbb{Q}(\mu _N)/\mathbb{Q})$ and $f ^{\sigma }$ is defined by taking action of $\sigma $ on each Fourier coefficient of $f$ when $f\in M_k(\Gamma )_{\mathbb{Q}(\mu _N)}$. Then we have $\Phi \in M_{ki}(\Gamma _2)_{\mathbb{Q}}$. This modularity follows from Sturm's result \cite{St2}, p.344. It follows from the choice of $\lambda $ that $\Phi \neq 0$. Hence we can take a suitable constant $C\in \mathbb{Q}$ such that
\begin{align*}
v_p\left(C\sum _{\sigma \in Gal(\mathbb{Q}(\mu _N)/\mathbb{Q})}(\lambda \prod _{j=2}^{i}F|_k\gamma _j)^{\sigma }\right)=0.
\end{align*}
This means $C\Phi \in M_{ki}(\Gamma _2)_{\mathbb{Z}_{(p)}}$. Using (\ref{ord2}), we obtain
\begin{align*}
{\rm ord}_p(C\Phi)={\rm ord}_{p}(F)+{\rm ord}_{p}\left(C\sum _{\sigma \in Gal(\mathbb{Q}(\mu _N)/\mathbb{Q})}(\lambda \prod _{j=2}^{i}F|_k\gamma _j)^{\sigma }\right).
\end{align*}
Note here that ${\rm ord}_{p}(C\Phi )\ge {\rm ord}_p(F)>\frac{1}{10}ki$. Thus, we have by Proposition \ref{level1} $${\rm ord}_{p}(C\Phi
)=\infty.$$ This implies
\begin{align*}
{\rm ord}_{p}(F)+{\rm ord}_p\left(C\sum _{\sigma \in Gal(\mathbb{Q}(\mu _N)/\mathbb{Q})}(\lambda \prod _{j=2}^{i}F|_k\gamma _j)^{\sigma }\right)=\infty .
\end{align*}
The second part is finite and hence we have ${\rm ord}_p(F)=\infty$, namely $F\equiv 0$ mod $p$. This completes the proof of Theorem \ref{main1}.
\end{proof}

\begin{proof}[{\bf{Proof of Theorem \ref{main2}}}]
By Theorem 1 in \cite{C-C-R}, it is enough to show that if
$A(n,r,m)\equiv 0 \pmod{p}$ for every $(n,r,m)$ such that $p\nmid
nm$, then $F\equiv 0 \pmod{p}$. To prove it   consider the
Fourier-Jacobi expansion of
$$F(\tau,z,\tau')= \sum_{m=0}^{\infty} \phi_m(\tau,z) e^{2\pi i m\tau'}.$$ Suppose that $A(n,r,m)\equiv 0 \pmod{p}$ for every $(n,r,m)$ such that $p\nmid nm$. If $m<p$ and $n<p$, then $A(n,r,m)\equiv 0 \pmod{p}$. Note that
$\frac{1}{10}k<p$. By Theorem \ref{main1} we complete the proof.
\end{proof}

\section{{\bf{Examples}}} \subsection{Examples for level 1}\label{example} Let $t(k)=\left[\frac{k}{10}\right]$ and $p\geq5$ be a prime. Consider a
Siegel modular

$$G_k(Z)=\sum_{\begin{smallmatrix} n,r,m \in \mathbb{Z}\\ n,m,4nm-r^2\geq 0 \end{smallmatrix}}A_G(n,r,m)q^n \xi^r q'^m$$ of weight $k$ and genus 2
defined as $$ G_k(Z) :=\left\{\begin{array}{cc} E_4^{(2)}(Z)^iE_6^{(2)}(Z)^j\chi_{10}^{t(k)}(Z) \;\; (i+j+t(k)=k,\; j=0 \text{ or } 1) \;   \mbox{\,
if $ k \not \equiv 2 \pmod{10}$},\\
  \chi_{10}^{t(k)-1}(Z)\chi_{12}(Z) \mbox{\, \, if $ k\equiv 2
  \pmod{10}$}
 \end{array}\right\}
$$ Recall that \begin{align*} &E_4^{(2)}(Z) = 1 + 240q + 240q'+\cdots,\\ &E_6^{(2)}(Z)= 1-504q -504q'+\cdots,\\ &\chi_{10}(Z)= (\xi^{-1}-2 +
\xi)qq'+\cdots,\\ &\chi_{12}(Z)= (\xi^{-1} + 10 + \xi)qq'+\cdots. \end{align*} Thus, we have $$ G_k(Z) =\left\{\begin{array}{cc} (\xi^{-1}-2 +
\xi)^{t(k)}q^{t(k)}q'^{t(k)}+\cdots \mbox{\,  if $ k \not \equiv 2 \pmod{10}$},\\
  (\xi^{-1} + 10 + \xi)(\xi^{-1}-2 + \xi)^{t(k)-1}q^{t(k)}q'^{t(k)}+\cdots \mbox{\, if $ k\equiv 2 \pmod{10}$}.
 \end{array}\right\}
$$ The coefficients $A_G(n,r,m)$ are integral and $A_G(n,r,m)\equiv 0 \pmod{p}$ for $n\leq \frac{k}{10}-1$ and $m\leq \frac{k}{10}-1$. This implies
that the bounds in Theorem \ref{main1} are sharp since $G_k \not \equiv 0 \pmod{p}$.

\subsection{Examples for level 11 and 19}\label{example2}

It is known that we can construct a cusp form of $\Gamma _{0}^{(2)}(11)$ of weight $2$ by Yoshida lift (cf. \cite{Yo}). For matrices \begin{align*}
S_1^{(11)}:=\begin{pmatrix}1 & \frac{1}{2} & 0 & 0 \\ \frac{1}{2} & 3 & 0 & 0 \\ 0 & 0 & 1 & \frac{1}{2} \\ 0 & 0 & \frac{1}{2} & 3
\end{pmatrix},\quad S_2^{(11)}:=\begin{pmatrix} 2 & 0 & 1 & \frac{1}{2} \\ 0 & 2 & \frac{1}{2} & -1 \\ 1 & \frac{1}{2} & 2 & 0 \\ \frac{1}{2} & -1 & 0
& 2 \end{pmatrix},\quad S_3^{(11)}:=\begin{pmatrix} 1 & 0 & \frac{1}{2} & 0 \\ 0 & 4 & 2 & \frac{3}{2} \\ \frac{1}{2} & 2 & 4 & \frac{7}{2} \\ 0 &
\frac{3}{2} & \frac{7}{2} & 4 \end{pmatrix}, \end{align*} if we put \begin{align*} F^{(11)}_2:=\frac{1}{24}(3\theta _{S_1^{(11)}}-\theta
_{S_2^{(11)}}-2\theta _{S_3^{(11)}}), \end{align*} then $F^{(11)}_2\in S_2(\Gamma _{0}^{(2)}(11))_{\mathbb{Z}_{(p)}}$, where
$\theta _{S_j}$ is defined by \begin{align*} \theta _{S_j}(Z):=\sum_{X\in M_{4,2}(\mathbb{Z})}e^{2\pi i {\rm tr}(S_j[X]Z)}. \end{align*}

We give one more example of Yoshida lift. For matrices \begin{align*} S_1^{(19)}:=\begin{pmatrix}1 & 0 & \frac{1}{2} & 0 \\ 0 & 1 & 0 & \frac{1}{2} \\
\frac{1}{2} & 0 & 5 & 0 \\ 0 & \frac{1}{2} & 0 & 5 \end{pmatrix},\quad S_2^{(19)}:=\begin{pmatrix} 1 & \frac{1}{2} & \frac{1}{2} & \frac{1}{2} \\
\frac{1}{2} & 2 & 0 & 1 \\ \frac{1}{2} & 0 & 3 & \frac{3}{2} \\ \frac{1}{2} & 1 & \frac{3}{2} & 6 \end{pmatrix},\quad S_3^{(19)}:=\begin{pmatrix} 2 &
0 & 1 & \frac{1}{2} \\ 0 & 2 & \frac{1}{2} & 1 \\ 1 & \frac{1}{2} & 3 & \frac{1}{2} \\ \frac{1}{2} & 1 & \frac{1}{2} & 3 \end{pmatrix}, \end{align*}
if we put \begin{align*} F^{(19)}_2:=\frac{1}{8}(\theta _{S_1^{(19)}}-2\theta _{S_2^{(19)}}+\theta _{S_3^{(19)}}), \end{align*} then $F^{(19)}_2\in
S_2(\Gamma _{0}^{(2)}(19))_{\mathbb{Z}_{(p)}}$.

Let $E_4=E_4^{(2)}$, $E_6=E_6^{(2)}$, $\chi _{10}$ and $\chi _{12}$ be Igusa's generators as introduced in Section \ref{proofs}. We set $\chi _{20}:=11E_4E_6\chi_{10}+4\chi _{10}^2+8E_4^2\chi _{12}$. For a Siegel modular form $F$, we denote by $a_F(T)$ the $T$-th Fourier coefficient of $F$. Then, the following proposition is known.

\begin{prop}[Nagaoka-Nakamura \cite{Na-Na}] \label{Na-Na} The following holds. \\ (1) $a_{F^{(11)}_2}(T)\equiv a_{-\chi _{12}}(T)$ mod $11$ for all $T$
with ${\rm tr}(T)\le 5$.\\ (2) $a_{F^{(19)}_2}(T)\equiv a_{\chi _{20}}(T)$ mod $19$ for all $T$ with ${\rm tr}(T)\le 4$. \end{prop}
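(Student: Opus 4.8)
The plan is to prove the proposition by a completely explicit but \emph{finite} comparison of Fourier coefficients, exploiting that for each of the forms involved the coefficient $a(T)$ depends only on the $GL_2(\mathbb{Z})$-class of $T$. First I would record that the Fourier coefficients of the theta series are representation numbers: with the convention $S_j[X]=\trans{X}S_jX$ one has $\theta_{S_j}(Z)=\sum_T r_{S_j}(T)e^{2\pi i\,{\rm tr}(TZ)}$, where $r_{S_j}(T)=\#\{X\in M_{4,2}(\mathbb{Z}):S_j[X]=T\}$. Hence
\begin{align*}
a_{F^{(11)}_2}(T)=\tfrac{1}{24}\big(3r_{S_1^{(11)}}(T)-r_{S_2^{(11)}}(T)-2r_{S_3^{(11)}}(T)\big),\quad a_{F^{(19)}_2}(T)=\tfrac{1}{8}\big(r_{S_1^{(19)}}(T)-2r_{S_2^{(19)}}(T)+r_{S_3^{(19)}}(T)\big).
\end{align*}
Both the representation numbers and the Fourier coefficients of Siegel modular forms are invariant under $T\mapsto \trans{U}TU$ for $U\in GL_2(\mathbb{Z})$ (for the theta coefficients this is immediate from $X\mapsto XU$, and for the modular forms it is the standard $GL_2(\mathbb{Z})$-symmetry of Fourier coefficients, coming from the block $\mathrm{diag}(\trans{U}^{-1},U)\in\Gamma_0^{(2)}(N)$). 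Since all the forms are cusp forms, only positive-definite $T$ contribute, so it suffices to verify the congruence on a set of $GL_2(\mathbb{Z})$-reduced representatives of positive-definite half-integral $T$ with ${\rm tr}(T)\le 5$ (resp. $\le 4$); this is a short finite list.

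Next I would compute the two sides on this list. On the right I would expand $\chi_{12}$ and $\chi_{20}=11E_4E_6\chi_{10}+4\chi_{10}^2+8E_4^2\chi_{12}$ from Igusa's generators, using the (classical, tabulated) low-order Fourier coefficients of $E_4$, $E_6$, $\chi_{10}$, $\chi_{12}$ recalled above, multiplying the truncated Fourier--Jacobi expansions far enough to read off $a_{\chi_{12}}(T)$ and $a_{\chi_{20}}(T)$ for every $T$ in the finite range. On the left I would tabulate the quaternary representation numbers $r_{S_j}(T)$. For ${\rm tr}(T)$ small the positivity of each $S_j$ forces the columns of $X$ to have bounded length in the form $S_j$, so the count is a finite enumeration of $X\in M_{4,2}(\mathbb{Z})$ with entries in a bounded box; equivalently one may expand the genus-$2$ theta series to the needed order.

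Finally I would assemble the linear combinations, reduce modulo $11$ and $19$ respectively, and check equality entry by entry against $-a_{\chi_{12}}(T)$ and $a_{\chi_{20}}(T)$; note that $11\nmid 24$ and $19\nmid 8$, so the scalars cause no trouble. The obstacle here is organizational rather than conceptual: the work is in enumerating the representation numbers of the three quaternary lattices on each reduced class while keeping every normalization consistent (the factors $\tfrac{1}{24}$, $\tfrac{1}{8}$, the half-integral convention for $T$, and the choice of reduced representative). I expect the least-diagonal Gram matrices, namely $S_3^{(11)}$ and the level-$19$ forms, to be the most error-prone, so I would cross-check them using the diagonal restriction: $W(\theta_{S_j})=\theta^{(1)}_{S_j}(\tau)\theta^{(1)}_{S_j}(\tau')$ with $\theta^{(1)}_{S_j}$ the genus-$1$ theta series of the lattice, and $W(\chi_{12})=\Delta\otimes\Delta$ while $W(\chi_{10})=0$, so that the restriction of the claimed congruences reduces to an elementary genus-$1$ verification against the weight-$2$ level-$11$ (resp. level-$19$) data. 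This provides an independent numerical control on the hardest part of the computation.
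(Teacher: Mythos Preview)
Your approach is sound, but note that the paper does not actually prove this proposition: it is quoted verbatim from Nagaoka--Nakamura \cite{Na-Na} and used as input for Proposition~\ref{Thm2}. There is thus no argument in the paper to compare against. What you outline---reducing to the finite list of $GL_2(\mathbb{Z})$-classes of positive-definite half-integral $T$ with ${\rm tr}(T)\le 5$ (resp.\ $\le 4$), computing the quaternary representation numbers $r_{S_j}(T)$ on one side and the Fourier coefficients of $\chi_{12}$, $\chi_{20}$ from Igusa's generators on the other, and checking the congruence entry by entry---is exactly the kind of finite numerical verification that establishes such a statement, and is presumably what the cited reference does. Your observations that both sides are cusp forms (so singular $T$ contribute nothing), that $11\nmid 24$ and $19\nmid 8$, and that the Witt restriction gives an independent genus-$1$ sanity check are all correct and to the point. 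There is no conceptual gap; the proof is a computation, and you have correctly identified all the pieces needed to carry it out.
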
 Now we can show the following congruences.
\begin{prop} \label{Thm2} (1) $F_2^{(11)}\equiv -\chi _{12}$ mod $11$, (2) $F^{(19)}_2\equiv \chi _{20}$ mod $19$. \end{prop}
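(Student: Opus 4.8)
The plan is to derive both congruences from the level-one Sturm bound of Proposition \ref{level1}, after first placing the two sides of each congruence in a common weight. The forms to be compared have different weights — $F_2^{(p)}$ has weight $2$, while the candidates $-\chi_{12}$ and $\chi_{20}$ have weights $12$ and $20$ — so the first step is to equalize them. Multiplying $F_2^{(p)}$ by the genus-$2$ Siegel Eisenstein series $E^{(2)}_{p-1}$ of weight $p-1$, which satisfies $E^{(2)}_{p-1}\equiv 1\pmod p$ (cf.\ \cite{N}), produces a form $E^{(2)}_{p-1}F_2^{(p)}$ of weight $2+(p-1)=p+1$ that is congruent to $F_2^{(p)}$ modulo $p$. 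For $p=11$ and $p=19$ we have $p+1=12$ and $p+1=20$, which is exactly why the candidates are the weight-$12$ form $-\chi_{12}$ and the weight-$20$ form $\chi_{20}$. It therefore suffices to prove
\begin{align*}
D_{11}&:=E^{(2)}_{10}F_2^{(11)}+\chi_{12}\equiv 0\pmod{11},\\
D_{19}&:=E^{(2)}_{18}F_2^{(19)}-\chi_{20}\equiv 0\pmod{19}.
\end{align*}

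Next I would invoke the Sturm bound. Regarded as a level-one form of weight $p+1$, Proposition \ref{level1} forces $D_p\equiv 0\pmod p$ as soon as its Fourier coefficients $A(n,r,m)$ vanish modulo $p$ for all $0\le n,m\le\frac{p+1}{10}$, i.e.\ for all $T$ with ${\rm tr}(T)=n+m\le\frac{p+1}{5}$. For $p=19$ this is the range ${\rm tr}(T)\le 4$, which is precisely the range in Proposition \ref{Na-Na}(2); for $p=11$ it is ${\rm tr}(T)\le 2.4$, comfortably inside the range ${\rm tr}(T)\le 5$ of Proposition \ref{Na-Na}(1). Because $E^{(2)}_{p-1}\equiv 1\pmod p$, the Fourier coefficients of $E^{(2)}_{p-1}F_2^{(p)}$ agree modulo $p$ with those of $F_2^{(p)}$; hence Proposition \ref{Na-Na} says exactly that the coefficients of $D_p$ vanish modulo $p$ throughout the range demanded by the Sturm bound, and Proposition \ref{level1} then yields $D_p\equiv 0\pmod p$, which is the assertion.

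The subtle point — and the step I expect to be the main obstacle — is the level. A priori $F_2^{(p)}$ lies only in $M_{p+1}(\Gamma_0^{(2)}(p))$, so $D_p$ is only a form of level $p$, and Theorem \ref{main1} then demands vanishing up to the far larger bound $\frac{p+1}{10}[\Gamma_2:\Gamma_0^{(2)}(p)]=\frac{p+1}{10}(p+1)(p^2+1)$, well beyond the trace range supplied by Proposition \ref{Na-Na}. To bring the small level-one bound into play one must know that $D_p$ reduces modulo $p$ to a genuine level-one mod $p$ form. I would establish this from the theta-series (Yoshida-lift) description of $F_2^{(p)}$ together with the Eisenstein congruence $E^{(2)}_{p-1}\equiv 1\pmod p$, exhibiting the reduction of $D_p$ as the reduction of a form on $\Gamma_2$; verifying this level descent — and, underlying it, the congruence $E^{(2)}_{p-1}\equiv 1\pmod p$ itself, which may require a mild hypothesis on $p$ — is where the genuine work of the argument lies.
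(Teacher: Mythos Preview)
Your plan matches the paper's argument; the level-descent step you correctly flag as the crux is handled there simply by citing B\"ocherer--Nagaoka \cite{Bo-Na}, which already guarantees a level-one form $G_{p+1}\in M_{p+1}(\Gamma_2)_{\mathbb{Z}_{(p)}}$ with $F_2^{(p)}\equiv G_{p+1}\pmod p$, so no separate Eisenstein multiplication or ad hoc theta-series argument is needed. With that in hand one compares $G_{p+1}$ directly with $-\chi_{12}$ (resp.\ $\chi_{20}$) inside $M_{p+1}(\Gamma_2)$ via the level-one Sturm bound of Proposition~\ref{level1} and the coefficient data of Proposition~\ref{Na-Na}, exactly as you outline.
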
 \begin{proof} Since the proof is
similar, we show (1) only. It is known by B\"ocherer-Nagaoka \cite{Bo-Na} that there exists a modular form $G_{12}\in M_{12}(\Gamma _2)_{\mathbb{Z}_{(p)}}$ such that
$F^{(11)}_2\equiv G_{12} \pmod{11}$. From Proposition \ref{Na-Na}, $a_{F_{12}}(T)\equiv a_{-\chi_{12}}(T) \pmod{11}$ for all $T$ with ${\rm tr}(T)\le 5$.
Applying Theorem \ref{main1} to $F_{12}$ and $-\chi _{12}$, we have $G_{12}\equiv -\chi _{12} \pmod{11}$. In fact, $\frac{k}{10}=\frac{12}{10}$. Hence it
suffices to prove that we check the congruence of the Fourier coefficients for all $T$ with ${\rm tr}(T)\le 2$. Thus we obtain $F_2^{(11)}\equiv
G_{12}\equiv -\chi _{12} \pmod{11}$. This completes the proof of (1).  \end{proof}

\begin{rem} In \cite{Bo-Na}, B\"ocherer-Nagaoka expected that for each $F_2\in M_2(\Gamma ^{(2)}_{0}(p))_{\mathbb{Z}_{(p)}}$, there exists a
$G_{p+1}\in M_{p+1}(\Gamma _2)_{\mathbb{Z}_{(p)}}$ such that $F_2\equiv G_{p+1} \pmod{p}$ and studied it. \end{rem}

 \end{document}